\documentclass[12pt]{article}
  \usepackage{amsfonts,amsmath}
  \usepackage{latexsym}
 \oddsidemargin=0cm \textwidth=16cm
 \textheight=24cm   \topmargin=-0.5cm
  \usepackage[ansinew]{inputenc}




   \def\R{{\rm I\mskip -3.5mu R}}
   \def\N{\mathbb{N}}
    \def\Z{\mathbb{Z}}

   \newenvironment{proof}
{\medskip\noindent{\bf Proof.\/}} {\null \hfill $\Box$
\par\medskip}
   \newenvironment{proof1}
{\medskip\noindent{\bf Proof of Theorem 2.1\/}} {\null \hfill $\Box$
\par\medskip}



\newtheorem{proposition}{Proposition}[section]

\newtheorem{theorem}{Theorem}[section]
\newtheorem{lemma}{Lemma}[section]
\newtheorem{remark}{Remark}[section]

\newcommand\AMSname{AMS subject classifications}


\title{ The effect of a discontinuous weight for a critical Sobolev problem}
\author{Rejeb Hadiji\thanks{Universit\'e Paris-Est Cr\'eteil, Laboratoire d'Analyse et
de Math\'ematiques Appliqu\'ees, CNRS UMR 8050, UFR des Sciences et
Technologie, 61, Avenue du G\'en\'eral de Gaulle B\^at. P3, 4e
\'etage, 94010 Cr\'eteil Cedex, France. E-mail :
hadiji@u-pec.fr }  $\,$ Sami Baraket\thanks{D\'epartement de Math\'ematiques,
Facult\'e des Sciences de Tunis,
Campus Universitaire, 2092 Tunis,
Universit\'e Tunis El Manar,
Tunisie.
E-mail: smbaraket@yahoo.fr} $\,$  and Habib Yazidi
\thanks{Universit\'e de Tunis, Ecole Nationale Sup\'erieure d'Ing\'enieurs de Tunis, 5 Avenue Taha Hssine, Bab Mnar 1008 Tunis,  Tunisie. E-mail : habib.yazidi@gmail.com} }

\begin{document}

\date{}

\maketitle

\begin{abstract}
We study the minimizing problem
$\inf\left\{\displaystyle\int_{\Omega}p(x)|\nabla u|^{2}dx,\,u\in
H^{1}_{0}(\Omega),\,\|u\|_{L^{\frac{2N}{N-2}}(\Omega)}=1\right\}$
where $\Omega$ is a smooth bounded domain of $\R^{N}$, $N\geq 3$ and
$p$ a positive discontinuous function. We prove the existence of a minimizer under some assumptions.

\medskip

\noindent Keywords : {Critical Sobolev exponent, Lack of compactness, Best Sobolev constant.}
\medskip

\par
\noindent2010 \AMSname: 35J20, 35J25, 35H30, 35J60.
\end{abstract}
\section{Introduction}
Let $\Omega$ be a smooth bounded domain of $\R^{N}$, $N\geq 3$, $2^{*}=\frac{2N}{N-2}$ the critical exponent for Sobolev embedding. Define $\Omega_{1}$ and $\Omega_{2}$ two disjoint domains such that $\bar{\Omega}=\bar{\Omega}_{1}\cup \bar{\Omega_{2}}$. Denote $\Gamma=\partial\Omega_{1}\cap \partial\Omega_{2}$. Define the set $V(\Omega)=\displaystyle\left\{ u\in H^{1}_{0}(\Omega),\quad \int_{\Omega}|u|^{2^{*}}dx=1\right\}$ and define the barycenter function, see \cite{C}, $\beta:\, V(\Omega)\rightarrow \R^{N}$, $u\rightarrow \displaystyle\int_{\Omega}x\,|u|^{2^{*}}dx$.\\
We consider the minimizing problem
\begin{equation}
S(p)=\displaystyle\inf_{u\in V(\Omega),\,\,
\beta(u)\in \Gamma} \displaystyle\int_{\Omega}p(x)|\nabla u|^{2}dx,
\label{eq5}
\end{equation}
where $p$ is a discontinuous function as the following
\begin{equation}
p(x)=\left\{\begin{array}{lll} \alpha_1\quad &\textrm{if $x\in\Omega_{1}$}\\[\medskipamount]
\alpha_2 \quad &\textrm{if $x\in {\bar{\Omega}}_{2}$},
\end{array}
\right.
\label{condition poids}
\end{equation}
where $\alpha_{i}$, $i=1,2$ are some positive constants such that $\alpha_{1}<\alpha_{2}$.
It's important to remark that without the additional condition $\beta(u)\in \Gamma$ we have $S(p)=\alpha_{1} S$, as one can verify concentrating an extremal function for the best Sobolev constant $S$ near a point in the interior of the region $\Omega_{1}$. In this case, the infimum $S(p)$ is not achieved\\
This problem is closely related to the best constant in Sobolev inequality in $\R^{N}$. It posses many interesting properties, see Talenti \cite{T}, and arising in many areas of mathematics and in a geometric context namely for example in the Yamabe problem and the prescribed scalar
curvature problem see Aubin \cite{A}. It's invariance under dilations produces a lack of compactness.\\The phenomenon of lack of compactness and the failure of the Palais Smale condition of this type of problem has been the subject of several studies and it was analyzed in minute detail by Struwe \cite{S}.

In the case where $p$ is a constant, it is well known that (\ref{eq5}) is not achieved for a general domain $\Omega$. Nevertheless, Brezis and Nirenberg showed in \cite{BN} that (\ref{eq5}) has minimizer under a linear perturbation, also, in the same sprit, Demyanov and Nazarov establish, in \cite{DN}, sufficient conditions for the existence of an extremal function in four embedding theorems for more general Sobolev spaces. Bahri and Coron in \cite{BC}
 proved that the Euler equation associated to this problem is solvable when some homology group of the domain with coefficients in $\Z/2\Z$ is  nontrivial, see also the work of Coron \cite{C}.
  In the case where $p$ is a smooth positive function, Hadiji and Yazidi proved that the study of problem (\ref{eq5}) depends on the behavior of the weight $p$ near its minima, see \cite{HY}, see also Hadiji, Molle, Passaseo and Yazidi \cite{HPMY}) and Furtado and Souza\cite{F}.\\
 One may ask whether the lack of compactness of the variational functional associated to  (\ref{eq5}) can be made up by the discontinuity of the weight.\\
 In this work, we investigate the effect of non smoothness of weight $p$ on the existence of solution without linear perturbation or additional conditions on the domain $\Omega$.
\section{Statements and proofs of results}
Let
\begin{equation*}
S_{\alpha_1,\,\alpha_2}=\inf\left\{\alpha_1 \int_{\R^{N}_{+}}|\nabla u|^{2}dx+\alpha_2\int_{R^{N}_{-}}|\nabla u|^{2}dx,\,\,\, u\in H^{1}(\R^{N}),\,\,\, \textrm{$u\neq0$\,in $\R^{N}_{\pm}$,}\,\,\, \|u\|_{L^{2^{*}}(\R^{N})}=1\right\},
\end{equation*}
where $\R^{N}_{+}=\left\{(x',x_{N}) \in \R^{N-1}\times [0,\,\infty[\right\}$ and $\R^{N}_{-}=\left\{(x',x_{N}) \in \R^{N-1}\times ]-\infty,\,0]\right\}$.\\
Set
$$S^{+}=\inf\left\{\int_{\R^{N}_{+}}|\nabla u|^{2}dx,\quad u\in H^{1}(\R^{N}_{+}),\,\,\, \textrm{$u\neq0$\,in $\R^{N}_{+}$,}\,\,\,\, \|u\|_{L^{2^{*}}(\R^{N}_{+})}=1\right\}.$$

and
$$S^{-}=\inf\left\{\int_{\R^{N}_{-}}|\nabla u|^{2}dx,\quad u\in H^{1}(\R^{N}_{-}),\,\,\, \textrm{$u\neq0$\,in $\R^{N}_{-}$,}\,\,\,\, \|u\|_{L^{2^{*}}(\R^{N}_{-})}=1\right\}.$$
It is easy to verify that (see for example \cite{CP})
\begin{equation}
S^{+}=S^{-}=\frac{S}{2^{\frac{2}{N}}},
\label{eq2}
\end{equation}
where $S$ is the best constant of the Sobolev embedding defined by
$$S=\displaystyle\inf_{u\in H^{1}(\R^{N})\setminus\{0\}}\frac{\displaystyle\int_{\R^{N}}|\nabla u|^{2}dx}{\displaystyle\left(\int_{\R^{N}}|u|^{2^{*}}dx\right)^{\frac{2}{2^{*}}}} .$$
We state our main results as follow

\begin{theorem}
The following equality holds
$$S_{\alpha_1,\,\alpha_2}=\left(\frac{\alpha_1^{\frac{N}{2}}+\alpha_2^{\frac{N}{2}}}{2} \right)^{\frac{2}{N}}S.$$
\label{th1}
\end{theorem}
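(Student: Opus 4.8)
The plan is to prove the two inequalities $S_{\alpha,\beta}\le \left(\frac{\alpha^{N/2}+\beta^{N/2}}{2}\right)^{2/N}S$ and $S_{\alpha,\beta}\ge \left(\frac{\alpha^{N/2}+\beta^{N/2}}{2}\right)^{2/N}S$ separately, the whole argument resting on the identity $S^{+}=S^{-}=S/2^{2/N}$ recorded in (\ref{eq2}). Throughout, for an admissible $u$ I would set $a=\int_{\R^{N}_{+}}|\nabla u|^{2}$, $b=\int_{\R^{N}_{-}}|\nabla u|^{2}$ and $t=\int_{\R^{N}_{+}}|u|^{2^{*}}$, so that $\int_{\R^{N}_{-}}|u|^{2^{*}}=1-t$ because $\|u\|_{L^{2^{*}}(\R^{N})}=1$. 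The guiding idea is that once the energy is split along the interface $\Gamma=\{x_{N}=0\}$, everything reduces to a one–variable optimisation over the mass fraction $t\in[0,1]$.

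\textbf{Lower bound.} Since $u\in H^{1}(\R^{N})$, its restrictions lie in $H^{1}(\R^{N}_{\pm})$ and are nonzero there, so the very definitions of $S^{+}$ and $S^{-}$ give $a\ge S^{+}t^{2/2^{*}}$ and $b\ge S^{-}(1-t)^{2/2^{*}}$. Using $S^{+}=S^{-}$ this yields
\[
\alpha a+\beta b\ \ge\ S^{+}\bigl(\alpha\,t^{2/2^{*}}+\beta\,(1-t)^{2/2^{*}}\bigr)=:S^{+}g(t).
\]
It then remains to study $g$ on $[0,1]$. Its critical point solves $\alpha\,t^{2/2^{*}-1}=\beta\,(1-t)^{2/2^{*}-1}$, and since $2/2^{*}-1=-2/N$ this gives the extremal split $t^{*}=\alpha^{N/2}/(\alpha^{N/2}+\beta^{N/2})$; a direct substitution then produces $g(t^{*})=(\alpha^{N/2}+\beta^{N/2})^{2/N}$. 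Feeding in $S^{+}=S/2^{2/N}$ gives exactly the constant $\left(\frac{\alpha^{N/2}+\beta^{N/2}}{2}\right)^{2/N}S$.

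\textbf{Upper bound.} Here I would exhibit a family concentrating on $\Gamma$ and realising the split $t^{*}$. Starting from the Aubin--Talenti extremal $U$ of $S$, whose restrictions to $\R^{N}_{\pm}$ are extremals for $S^{\pm}$, the plan is to glue a suitable multiple of $U$ on $\R^{N}_{+}$ to a suitable multiple on $\R^{N}_{-}$, with amplitude ratio dictated by $t^{*}$, i.e. by $(\alpha/\beta)^{(N-2)/4}$, so that the weighted numerator and the $L^{2^{*}}$ mass distribute as in the equality case above. Matching the two pieces forces a thin transition layer; I would insert a cut-off there, rescale the concentration parameter to $0$, and check that the gluing contributes only a lower-order term, so that the Rayleigh quotient converges to $S^{+}g(t^{*})$.

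\textbf{Main obstacle.} The delicate point is precisely the coupling of the two sides through the common trace on $\Gamma$: the inequalities $a\ge S^{+}t^{2/2^{*}}$ and $b\ge S^{-}(1-t)^{2/2^{*}}$ cannot in general be saturated \emph{simultaneously} by a single function of $H^{1}(\R^{N})$, because the $S^{\pm}$–extremals do not vanish on $\Gamma$ while a jump across $\Gamma$ is forbidden. Consequently the heart of the matter is to show that the bound is sharp by constructing a \emph{continuous} concentrating sequence whose gluing error genuinely vanishes, and to verify that $t^{*}$ is the operative fraction. Estimating that transition-layer error, and reconciling it with the one-dimensional optimisation of $g$, is where I expect the real work to lie.
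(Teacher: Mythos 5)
Your proposal follows exactly the paper's route: parametrise admissible functions by the mass fraction $t=\|u\|_{L^{2^{*}}(\R^{N}_{+})}^{2^{*}}$, bound each half-space energy from below via $S^{\pm}=S/2^{2/N}$, reduce the lower bound to the one-variable function $g(t)=\alpha\,t^{2/2^{*}}+\beta\,(1-t)^{2/2^{*}}$, and prove the upper bound by gluing near-extremal half-bubbles realising the split $t^{*}$. But there is a genuine gap at the optimisation step, and it is fatal. Since $2/2^{*}=(N-2)/N<1$, the function $g$ is strictly \emph{concave} on $[0,1]$; its interior critical point $t^{*}=\alpha^{N/2}/(\alpha^{N/2}+\beta^{N/2})$ is therefore its \emph{maximum}, not its minimum, and $\inf_{[0,1]}g=\min\bigl(g(0),g(1)\bigr)=\min(\beta,\alpha)=\alpha$. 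Your chain of inequalities $\alpha a+\beta b\ge S^{+}g(t)\ge S^{+}\inf_{[0,1]}g$ therefore yields only $S_{\alpha,\beta}\ge \alpha S/2^{2/N}$, which is strictly smaller than the target constant $S^{+}g(t^{*})=\left(\frac{\alpha^{N/2}+\beta^{N/2}}{2}\right)^{2/N}S$; nothing in the argument forces the mass fraction of a minimizing sequence to sit at $t^{*}$. (The paper's own proof of (\ref{eq3}) performs this identical step, so you have reproduced its strategy faithfully, but the step is invalid as written in both places.)

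The gap also cannot be repaired within this strategy, because the claimed lower bound is incompatible with the stated admissible class. Take a standard Talenti bubble concentrating at a point of $\R^{N}_{+}$ at distance $1$ from the hyperplane $\{x_{N}=0\}$: it is positive everywhere (hence nonzero in both half-spaces), has unit $L^{2^{*}}$ norm, and its weighted energy tends to $\alpha S$ as the concentration parameter goes to $0$, since the gradient mass escapes the lower half-space. Hence $S_{\alpha,\beta}\le\alpha S<\left(\frac{\alpha^{N/2}+\beta^{N/2}}{2}\right)^{2/N}S$ whenever $\alpha<\beta$, so no argument can give the asserted lower bound unless the admissible class carries an additional constraint pinning the concentration to the interface (for instance a prescribed mass split). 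Incidentally, this same off-interface sequence makes your upper-bound construction unnecessary for the inequality $S_{\alpha,\beta}\le\left(\frac{\alpha^{N/2}+\beta^{N/2}}{2}\right)^{2/N}S$; and the ``main obstacle'' you single out is indeed real --- bridging a fixed-factor amplitude jump across $\{x_{N}=0\}$ costs a transition-layer energy of order one, not $o(1)$, so the glued sequence as sketched would not converge to $S^{+}g(t^{*})$ --- but the first place the proof breaks down is the concavity issue in the lower bound, not the gluing.
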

\begin{theorem}
Let $\Omega$, $\Omega_{1}$, $\Omega_{2}$, $p$ be as defined in the introduction and let $x_{0} \in \Gamma$. Assume that the following geometrical condition (g.c.) on $\Gamma$ holds: in a neighborhood of $x_{0}$ , $\Omega_{2}$ lies on one side of the tangent plane at $x_{0}$ and the mean curvature with respect to the unit inner normal of $\Omega_{2}$ at
$x_{0}$ is positive.
Then $S(p)$ is attained by some $u\in H_{0}^{1}(\Omega)$ .
\label{th2}
\end{theorem}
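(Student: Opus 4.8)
The plan is to follow the by-now classical strategy for minimization problems with critical growth (Brezis--Nirenberg \cite{BN}, Struwe \cite{S}): show that the infimum $S(p)$ lies strictly below the energy level at which a minimizing sequence can lose compactness through concentration, and then invoke the concentration--compactness principle to pass to the limit. The natural candidate for this critical level along the interface is the constant $S_{\alpha,\beta}$ computed in Theorem \ref{th1}, which is exactly the energy carried by a bubble concentrating at a point of $\Gamma_1$ and split between the two phases with weights $\alpha$ and $\beta$. The whole argument then reduces to producing a single admissible competitor that beats this level, i.e. to establishing the strict inequality
$$S(p) < S_{\alpha,\beta}.$$
Once this is in hand, a standard compactness argument will force any normalized minimizing sequence to converge strongly in $H^1_0(\Omega)$, so that the infimum is achieved.

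To build such a test function I would start from an extremal configuration for $S_{\alpha,\beta}$. By Theorem \ref{th1} this extremal is, after the usual rescaling, a configuration assembled from Aubin--Talenti bubbles matched across the hyperplane $\{x_N=0\}$, with the scales on the two sides adjusted to the ratio of $\alpha$ and $\beta$. I would then transplant this profile into a small neighborhood of the point $x_0$ furnished by the geometric condition, using boundary normal coordinates that flatten $\Gamma_1$ near $x_0$ and send the inner normal of $\Omega_2$ to the vertical axis. Placing the concentration point at $x_0$ at scale $\e$ and multiplying by a fixed cut-off produces a function $u_\e \in H^1_0(\Omega)$; the admissible competitor is then $u_\e/\|u_\e\|_{L^{2^*}(\Omega)}$.

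The heart of the matter, and the step I expect to be the main obstacle, is the asymptotic expansion of the quotient $\int_\Omega p\,|\nabla u_\e|^2 \big/ \|u_\e\|_{L^{2^*}(\Omega)}^2$ as $\e \to 0$. Its leading term is $S_{\alpha,\beta}$, and the next term, which decides everything, comes from the curvature of $\Gamma_1$ introduced when the interface is straightened: it is proportional to the mean curvature of $\Omega_2$ at $x_0$ with respect to the inner normal, times a positive dimensional constant and a positive power of $\e$ (as in boundary-concentration problems). The geometric condition (g.c.) --- that near $x_0$ the set $\Omega_2$ lies on one side of the tangent plane and has positive mean curvature --- is precisely what fixes the sign of this term to be negative, giving $\int_\Omega p\,|\nabla u_\e|^2 < S_{\alpha,\beta}\,\|u_\e\|_{L^{2^*}(\Omega)}^2$ for $\e$ small. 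The delicate points are to carry out the change of variables tracking the Jacobian to the required order, to dominate the higher-order errors produced by the cut-off by the curvature term (which is where a restriction on $N$ may enter), and to check that the $\alpha$- and $\beta$-weighted contributions recombine with the same positive constant already present in Theorem \ref{th1}.

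Finally, with the strict inequality $S(p) < S_{\alpha,\beta}$ established, I would close the argument by the concentration--compactness dichotomy. A normalized minimizing sequence $(u_n)$ cannot vanish, since its $L^{2^*}$-mass is fixed; and the only remaining obstruction, namely concentration along the interface $\Gamma_1$, would force the limiting energy to be at least $S_{\alpha,\beta}$, contradicting $S(p) < S_{\alpha,\beta}$. Ruling out these scenarios yields strong convergence of $(u_n)$ in $H^1_0(\Omega)$ to a function $u$ realizing the infimum, which is exactly the assertion that $S(p)$ is attained.
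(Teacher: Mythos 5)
Your overall architecture coincides with the paper's: the proof is split into (i) the implication that $S(p)<S_{\alpha,\beta}$ forces attainment of the infimum (the paper does this with the Brezis--Lieb lemma, adapting Lemma~2.1 of Brezis--Nirenberg, rather than with the full concentration--compactness machinery, but the two routes are interchangeable here), and (ii) a test-function computation at the point $x_{0}$ given by (g.c.), using a bubble split across the interface with anisotropic scalings on the two sides tuned to the optimal $t_{0}$ from Theorem~\ref{th1}. So the plan is the right one and matches the paper's Lemma~\ref{lm1} and Lemma~\ref{lm2}.

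There is, however, a genuine gap in the step you yourself flag as the heart of the matter, and it is not merely an omitted computation: your description of where the sign comes from is incorrect. You claim the second-order term is a single curvature contribution proportional to $H(x_{0})$ whose negativity is ``precisely'' fixed by (g.c.), ``as in boundary-concentration problems.'' That is the one-sided (Neumann) mechanism, and it does not operate here: the bubble lives on \emph{both} sides of $\Gamma_{1}$, and bending the interface transfers Dirichlet energy from one phase to the other rather than destroying it. In the actual expansion (the paper quotes Adimurthi--Mancini) the curvature corrections appear with \emph{opposite} signs in the two quotients,
\begin{equation*}
Q_{1}(u_{0,\varepsilon})=\frac{t_{0}^{2/2^{*}}S}{2^{2/N}}+S\,H(0)\,A(N)\,\varepsilon^{1/2}+\dots,
\qquad
Q_{2}(u_{0,\varepsilon})=\frac{(1-t_{0})^{2/2^{*}}S}{2^{2/N}}-S\,H(0)\,A(N)\,\varepsilon^{1/2}+\dots,
\end{equation*}
so that in the weighted sum $Q=\alpha Q_{1}+\beta Q_{2}$ they cancel identically when $\alpha=\beta$, and what survives is
$-(\beta-\alpha)\,S\,H(0)\,A(N)\,\varepsilon^{1/2}$ (with an extra factor $|\ln\varepsilon|$ when $N=3$). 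The strict inequality $S(p)<S_{\alpha,\beta}$ therefore needs \emph{both} $H(0)>0$ and the jump $\beta>\alpha$; the discontinuity of the weight is exactly what makes the curvature visible at this order, and an expansion set up expecting a sign from $H(x_{0})$ alone would come out inconclusive. This is also consistent with the paper's non-existence Proposition: for a flat interface the correction vanishes and $S(p)$ is not achieved, no matter how large $\beta-\alpha$ is. Your plan can be repaired by carrying out the two-sided expansion and keeping track of the relative sign, which is precisely what the paper's Lemma~\ref{lm2} does.
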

The following proposition presents a strict lower bound for the minimizing problem
\begin{proposition}
The following inequality holds
 $$\alpha_{1}\,S< S(p).$$
\end{proposition}
\begin{proof}
We have $S(p)\geq \alpha_{1}\,S$. Arguing by contradiction, suppose that $S(p)=\alpha_{1}\,S$. Let $\{u_{j}\}$  {be} a minimising sequence, that is, for every $j\in \N$, $u_{j} \in V(\Omega)$, $\beta(u_{j})\in \Gamma$ and $\displaystyle\lim_{j\rightarrow +\infty}\int_{\Omega} p(x)|\nabla u_{j}|^{2}dx=\alpha_{1}\,S$.\\
Since $\displaystyle\int_{\Omega} p(x)|\nabla u_{j}|^{2}dx\geq \alpha_{1}\,S$ then $\displaystyle\lim_{j\rightarrow +\infty}\int_{\Omega} |\nabla u_{j}|^{2}dx=S$. Therefore, there exists $z_{0}\in \bar{\Omega}$ such that, for a subsequence, $|\nabla u_{j}|^{2}\rightarrow S \, \delta_{z_{0}}$ and $|u_{j}|^{2^{*}}\rightarrow\delta_{z_{0}}$, where $\delta_{z_{0}}$ is the Dirac mass in $z_{0}$, see \cite{L}.\\
Since $\beta(u_{j})\in \Gamma$ for every $j\in \N$, it follows that $z_{0}\in \Gamma$ and $p(z_{0})=\alpha_{2}> \alpha_{1}$. Therefore $\displaystyle\lim_{j\rightarrow +\infty}\int_{\Omega} p(x)|\nabla u_{j}|^{2}dx=p(z_{0})\,S>\alpha_{1}\,S$, which gives a contradiction.
\end{proof}
\begin{remark} Let us give  simple  examples for which the condition $(g.c.)$ in Theorem 2 is fulfilled or not.
Let $\Omega = B(0,R)$, $R>1$ and $e_1 =(1,0...,0)$.\par
Set $\Omega_2 = B(e_1,R) \cap \Omega$, $\Omega_1 =  \Omega\setminus \overline{\Omega}_2$, $\Gamma =\partial\Omega_{1}\cap\partial\Omega_{2}$ and $x_{0}$ in the interior of $\Gamma$. We have condition $(g.c.) $ holds.
\par
For  $\Omega_1 = B(e_1,R) \cap \Omega$, $\Omega_2 =
\Omega\setminus \overline{\Omega}_1$ and , $\Gamma =\partial\Omega_{1}\cap\partial\Omega_{2}$ and $x_{0}$ in the interior of $\Gamma$. We have condition $(g.c.)
$ does not hold. More precisely, in any neighborhood of $x_ {0}$,
$\Omega_{2}$ does not lie on one side of the tangent plane at $x_
{0}$ and the mean curvature with respect to the unit inner normal of
$\Omega_{2}$ at $x_{0}$ is negative.
\par
Let $\Omega_1 =  \{ (x_1,...,x_N) \in \Omega \,\,\,\,\,\, s.t. \quad x_1 > 0 \}$ and $\Omega_2 = \{ (x_1,...,x_N)\in \Omega \,\,\,\,\,\, s.t. \quad x_1< 0\}$ and $x_{0}=0$. We have condition $(g.c.) $ hold,
more precisely, in any neighborhood of $0$, $\Omega_{2}$  lies on one side of the tangent plane at $0$ but the mean curvature with respect to the unit inner normal of $\Omega_{2}$ at  $0$ is 0.

\end{remark}

If $\Gamma$  is flat, that is mean that mean curvature at any point of  $\Gamma$ is zero, then we have the following non-existence result:
\begin{proposition}
Let $\Omega=B(0,R)$, $\Gamma =\{x \in \Omega \,\,\setminus x_{N}=0\}$ divided $\Omega$ into two subdomains $\Omega_{1}$ and $\Omega_{2}$. For $i=1,2$, let $p_i(x) = \alpha_i$ in $\Omega_i$ with $\alpha_1<\alpha_2$. Then $S(p)$ is not achieved.
\end{proposition}
Indeed, we have, if (\ref{eq5})  is achieved by $u$   then $ \vert u \vert$ is a minimization solution of (\ref{eq5}) . Let us suppose that $S(p)$ is achieved by some positive function $u \geq 0$,
then there exists a Lagrange multiplier $\mu\in\, R$ such that $u$ satisfies the Euler equation
\begin{equation}
\left\{
\begin{array}{llllll}
-\alpha_1\Delta u=\mu\,u^{2^{*}-1}&\textrm{in $\Omega_{1}$},\\[\medskipamount]
-\alpha_2\Delta u=\mu\,u^{2^{*}-1}&\textrm{in $\Omega_{2}$},\\[\medskipamount]
\alpha_1\frac{\partial u}{\partial \nu_{1}}+\alpha_2\frac{\partial u}{\partial \nu_{2}}=0&\textrm{on $\Gamma$},\\[\medskipamount]
u\not = 0 &\textrm{on $\Gamma$}\\[\medskipamount]
u=0&\textrm{on $\partial\Omega$},
\end{array}
\right.
\label{poh1}
\end{equation}
where $\nu_{1}$ and $\nu_{2}$ are respectively the outward normal of $\Omega_{1}$ and $\Omega_{2}$.\\
On one hand we multiply (\ref{poh1}) by $\nabla u\cdot x$ and we  integrate. On the other hand we multiply (\ref{poh1}) by $\frac{N-2}{2}u$ and we integrate. We obtain, after some computations, the Pohozaev identity
$$-\int_{\Gamma}\left[\alpha_1(x\cdot \nu_{1})|\frac{\partial u}{\partial \nu_{1}}|^{2}+\alpha_2(x\cdot \nu_{2})|\frac{\partial u}{\partial \nu_{2}}|^{2}\right]ds_{x}=\int_{\partial\Omega\cap\partial\Omega_{1}}\alpha_1\,(x\cdot\nu)|\frac{\partial u}
{\partial \nu}|^{2}+\alpha_2\,(x\cdot\nu)|\frac{\partial u}{\partial \nu}|^{2}ds_{x},$$
where $\nu$ is the outward of $\partial\Omega$. Since $B(0,R)$ is star-shaped about $0$ then $x\cdot \nu >0$ on $\partial\Omega$ and then
$$-\int_{\Gamma}\left[\alpha_1(x\cdot \nu_{1})|\frac{\partial u}{\partial \nu_{1}}|^{2}+\alpha_2(x\cdot \nu_{2})|\frac{\partial u}{\partial \nu_{2}}|^{2}\right]ds_{x}>0$$
which gives a contradiction since $x\cdot \nu_{1}=x\cdot \nu_{2}=0$ for every $x$ in  $\Gamma$. Therefore $S(p)$ is not achieved.

\begin{proof1}
On one hand, we claim that
\begin{equation}
S_{\alpha_1,\,\alpha_2}\geq
\left(\frac{\alpha_1^{\frac{N}{2}}+\alpha_2^{\frac{N}{2}}}{2}
\right)^{\frac{2}{N}}S. \label{eq3}
\end{equation}
Indeed, we see that, for all $t\in]0,\,1[$ we have
\begin{equation}
\begin{array}{lll}
S_{\alpha_1,\,\alpha_2}=\\
\inf\left\{\alpha_1 \int_{\R^{N}_{+}}|\nabla
u|^{2}dx+\alpha_2\int_{R^{N}_{-}}|\nabla u|^{2}dx,\,\, u\in
H^{1}(\R^{N}),\,\,
\|u\|^{2^{*}}_{L^{2^{*}}(\R^{N}_{+})}=t,\,\,\|u\|^{2^{*}}_{L^{2^{*}}(\R^{N}_{-})}=1-t\right\}.
\label{eq4}
\end{array}
\end{equation}
Therefore
\begin{eqnarray}
S_{\alpha_1,\,\alpha_2}&\geq \alpha_1\inf\left\{ \int_{\R^{N}_{+}}|\nabla u|^{2}dx,\quad u\in
H^{1}(\R^{N}),\,\,\,\,
\|u\|^{2^{*}}_{L^{2^{*}}(\R^{N}_{+})}=t,\,\,\,\,\|u\|^{2^{*}}_{L^{2^{*}}(\R^{N}_{-})}=1-t\right\}\nonumber\\[\medskipamount]
&+\alpha_2\inf\left\{\int_{R^{N}_{-}}|\nabla u|^{2}dx,\quad u\in
H^{1}(\R^{N}),\,\,\,\,
\|u\|^{2^{*}}_{L^{2^{*}}(\R^{N}_{+})}=t,\,\,\,\,\|u\|^{2^{*}}_{L^{2^{*}}(\R^{N}_{-})}=1-t\right\}.
\label{Avril1}
\end{eqnarray}
At this stage, define
$$A_{t}=\displaystyle\left\{u\in
H^{1}(\R^{N}),\,\,\,\,
\|u\|^{2^{*}}_{L^{2^{*}}(\R^{N}_{+})}=t,\,\,\,\,\|u\|^{2^{*}}_{L^{2^{*}}(\R^{N}_{-})}=1-t\right\},$$
$$B_{t}=\displaystyle\left\{u\in H^{1}(\R^{N}_{+}),\,\,\,\, \|u\|_{L^{2^{*}}}^{2^{*}}(\R^{N}_{+})=t\right\}$$ and $$C_{t}=\displaystyle\left\{u\in H^{1}(\R^{N}_{-}),\,\,\,\, \|u\|_{L^{2^{*}}}^{2^{*}}(\R^{N}_{-})=1-t\right\}.$$
We have
\begin{equation}
A_{t}\subset B_{t} \quad\textrm{and}\quad A_{t}\subset C_{t}.
\label{Avril2}
\end{equation}
We rewrite (\ref{Avril1}) as
\begin{eqnarray*}
S_{\alpha_1,\,\alpha_2}\geq \alpha_1\inf_{A_{t}} \int_{\R^{N}_{+}}|\nabla u|^{2}dx
+\alpha_2\inf_{A_{t}}\int_{R^{N}_{-}}|\nabla u|^{2}dx.
\end{eqnarray*}
Using (\ref{Avril2}) , we see that
\begin{eqnarray}
S_{\alpha_1,\,\alpha_2}\geq \alpha_1\inf_{A_{t}} \int_{\R^{N}_{+}}|\nabla u|^{2}dx
+\alpha_2\inf_{A_{t}}\int_{R^{N}_{-}}|\nabla u|^{2}dx\geq \alpha_1\inf_{B_{t}}\int_{\R^{N}_{+}}|\nabla u|^{2}dx
+\alpha_2\inf_{C_{t}}\int_{R^{N}_{-}}|\nabla u|^{2}dx.
\label{Avril3}
\end{eqnarray}
Or, looking at (\ref{eq2}), direct computations give that
\begin{eqnarray*}
\inf_{B_{t}}\int_{\R^{N}_{+}}|\nabla u|^{2}dx=\frac{t^{\frac{2}{2^{*}}}}{2^{\frac{2}{N}}}\,S
\end{eqnarray*}
and
\begin{eqnarray*}
\inf_{C_{t}}\int_{\R^{N}_{-}}|\nabla u|^{2}dx=\frac{(1-t)^{\frac{2}{2^{*}}}}{2^{\frac{2}{N}}}\,S.
\end{eqnarray*}
Then, (\ref{Avril3}) becomes
\begin{eqnarray*}
S_{\alpha_1,\,\alpha_2}&\geq& \alpha_1\,\, \frac{t^{\frac{2}{2^{*}}}}{2^{\frac{2}{N}}}\,S+\alpha_2 \,\, \frac{(1-t)^{\frac{2}{2^{*}}}}{2^{\frac{2}{N}}}S\\[\medskipamount]
&\geq& \frac{1}{2^{\frac{2}{N}}}\max_{t\in[0,\,1]}\displaystyle\left[\alpha_1\, t^{\frac{2}{2^{*}}}+\alpha_2\, (1-t)^{\frac{2}{2^{*}}}\right]S\\[\medskipamount]
&=&\left(\frac{\alpha_1^{\frac{N}{2}}+\alpha_2^{\frac{N}{2}}}{2}
\right)^{\frac{2}{N}}S
\end{eqnarray*}
which gives (\ref{eq3}).\\
On the other hand, we claim that
\begin{equation}
S_{\alpha_1,\,\alpha_2}\leq
\left(\frac{\alpha_1^{\frac{N}{2}}+\alpha_2^{\frac{N}{2}}}{2}
\right)^{\frac{2}{N}}S. \label{eq3'}
\end{equation}
In order to prove the previous claim, for every $x\in\R^{N}$ we denote $x=(x',x_{N})$ where $x'\in \R^{N-1}$.\\
Let $\{u_{j}^{+}\}$ be a minimizing sequence of $S^{+}$. We define the sequence
 $$u_{j}^{-}(x',\,x_{N})=u_{j}^{+}(x',\,-x_{N})\quad \textrm{for all $x\in \R^{N-1}\times ]-\infty,\,0]$ and for all $j\in \N$.}$$
 Easily we see that $\{u_{j}^{-}\}$ is a minimizing sequence of $S^{-}$.\\
There exists
$t_{0}=\displaystyle\frac{(\frac{\alpha_1}{\alpha_2})^{\frac{N}{2}}}{1+(\frac{\alpha_1}{\alpha_2})^{\frac{N}{2}}}$
such that
\begin{equation*}
\begin{array}{lll}
\displaystyle
\displaystyle\sup_{t\in[0,\,1]}\frac{(\alpha_1\,t^{\frac{2}{2^{*}}}+\alpha_2\,(1-t)^{\frac{2}{2^{*}}})\,
S}{2^{\frac{2}{N}}}
=\displaystyle\frac{(\alpha_1\,t_{0}^{\frac{2}{2^{*}}}+\alpha_2\,(1-t_{0})^{\frac{2}{2^{*}}})\,
S}{2^{\frac{2}{N}}}
=\left(\frac{\alpha_1^{\frac{N}{2}}+\alpha_2^{\frac{N}{2}}}{2}
\right)^{\frac{2}{N}}S.
\end{array}
\end{equation*}
 We define the following functions :
 \begin{eqnarray*}
 v_{j}^{+}(x',\,x_{N})=t_{0}\,u_{j}^{+}(x',\,t_{0}^{\theta}\,x_{N})\,\,&\textrm{for all $x\in \R^{N-1}\times ]0,\,+\infty]$}\\[\medskipamount]
  v_{j}^{-}(x',\,x_{N})=t_{0}\,u_{j}^{-}(x',\,(1-t_{0})^{\theta}\,x_{N})\,\,&\textrm {for all $x\in \R^{N-1}\times ]-\infty,\,0]$},
\end{eqnarray*}
where $\theta=\displaystyle\frac{2(2^{*}-1)}{2^{*}}$.
Now, consider
\begin{equation}
w_{j}(x',\,x_{N})=\left\{\begin{array}{lll}
v_{j}^{+}(x',\,x_{N})\,\,&\textrm{for all $x'\in\R^{N-1}$ and for all $x_{N}\geq 0$}\\[\medskipamount]
v_{j}^{-}(x',\,x_{N})\,\,&\textrm{for all $x'\in\R^{N-1}$ and for all $x_{N}\leq 0$}
\end{array}
\right.
\end{equation}
An easy computation yields that, for large $j$, $w_{j}$ is a testing function for $S_{\alpha_1,\,\alpha_2}$ defined by (\ref{eq4}).\\
Therefore
\begin{eqnarray*}
S_{\alpha_1,\,\alpha_2}\leq \alpha_1 \int_{\R^{N}_{+}}|\nabla v_{j}^{+}|^{2}dx+\alpha_2 \int_{\R^{N}_{-}}|\nabla v_{j}^{-}|^{2}dx.
 \end{eqnarray*}
 Using the definitions of $v_{j}^{+}$ and $v_{j}^{-}$, we obtain
 \begin{equation*}
 S_{\alpha_1,\,\alpha_2}\leq \alpha_1\,\, \frac{t_{0}^{\frac{2}{2^{*}}}}{2^{\frac{2}{N}}}\,S+\alpha_2 \,\, \frac{(1-t_{0})^{\frac{2}{2^{*}}}}{2^{\frac{2}{N}}}S+o(1).
 \end{equation*}
 Then, using the definition of $t_{0}$ and letting $j\rightarrow +\infty$, we obtain
 \begin{eqnarray*}
S_{\alpha_1,\,\alpha_2}\leq\left(\frac{\alpha_1^{\frac{N}{2}}+\alpha_2^{\frac{N}{2}}}{2}
\right)^{\frac{2}{N}}S,
\end{eqnarray*}
 which gives (\ref{eq3'}).\\
 Finally, (\ref{eq3}) and (\ref{eq3'}) give the conclusion of Theorem~{\ref{th1}}.
\end{proof1}

The proof of Theorem~{\ref{th2}} follows from the following two Lemmas.
\begin{lemma}
Following the hypothesis of Theorem~{\ref{th2}}, we have
if $\alpha_{1} S<S(p)<S_{\alpha_1,\,\alpha_2}$ then the infimum in (\ref{eq5}) is achieved.
\label{lm1}
\end{lemma}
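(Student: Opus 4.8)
The plan is to run the concentration--compactness principle of P.-L. Lions on a minimizing sequence for $S(p)$ and to exploit the strict gap $S(p)<S_{\alpha,\,\beta}$ to prevent any escape of mass to a point. First I would choose $\{u_{j}\}\subset H^{1}_{0}(\Omega)$ with $\|u_{j}\|_{L^{2^{*}}(\Omega)}=1$ and $\int_{\Omega}p|\nabla u_{j}|^{2}dx\to S(p)$. Since $p\geq\alpha>0$ the sequence is bounded in $H^{1}_{0}(\Omega)$, so after extracting a subsequence I may assume $u_{j}\rightharpoonup u$ in $H^{1}_{0}(\Omega)$, $u_{j}\to u$ in $L^{2}(\Omega)$ and almost everywhere, while $p|\nabla u_{j}|^{2}dx\rightharpoonup\mu$ and $|u_{j}|^{2^{*}}dx\rightharpoonup\nu$ weakly-$*$ as bounded measures on $\overline{\Omega}$.

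I would then apply the concentration--compactness lemma to get an at most countable set $\{x_{k}\}\subset\overline{\Omega}$ and weights $\nu_{k},\mu_{k}\geq 0$ with $\nu=|u|^{2^{*}}dx+\sum_{k}\nu_{k}\delta_{x_{k}}$ and $\mu\geq p|\nabla u|^{2}dx+\sum_{k}\mu_{k}\delta_{x_{k}}$. The decisive ingredient is the sharp localized Sobolev inequality at each concentration point. Where $p$ is locally constant a cut-off and a dilation give $\mu_{k}\geq\alpha S\,\nu_{k}^{2/2^{*}}$ for $x_{k}\in\Omega_{1}$ and $\mu_{k}\geq\beta S\,\nu_{k}^{2/2^{*}}$ for $x_{k}\in\Omega_{2}$; for $x_{k}\in\Gamma_{1}$ a blow-up flattening $\Gamma_{1}$ onto $\{x_{N}=0\}$ reduces the local problem to the two-media model, so that Theorem~\ref{th1} yields $\mu_{k}\geq S_{\alpha,\,\beta}\,\nu_{k}^{2/2^{*}}$. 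Pairing $\mu$ and $\nu$ with the constant $1$ gives $S(p)\geq\int_{\Omega}p|\nabla u|^{2}dx+\sum_{k}\mu_{k}$ and $1=\|u\|_{L^{2^{*}}(\Omega)}^{2^{*}}+\sum_{k}\nu_{k}$.

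Next I would exclude concentration. Set $\lambda=\|u\|_{L^{2^{*}}(\Omega)}^{2^{*}}$; the definition of $S(p)$ gives $\int_{\Omega}p|\nabla u|^{2}dx\geq S(p)\,\lambda^{2/2^{*}}$. Granting that each quantum obeys $\mu_{k}\geq S_{\alpha,\,\beta}\,\nu_{k}^{2/2^{*}}$, I get $S(p)\geq S(p)\,\lambda^{2/2^{*}}+S_{\alpha,\,\beta}\sum_{k}\nu_{k}^{2/2^{*}}$. If some $\nu_{k}>0$, then using the subadditivity $\lambda^{2/2^{*}}+\sum_{k}\nu_{k}^{2/2^{*}}\geq(\lambda+\sum_{k}\nu_{k})^{2/2^{*}}=1$ and $S_{\alpha,\,\beta}>S(p)>0$ I obtain $S(p)>S(p)\big(\lambda^{2/2^{*}}+\sum_{k}\nu_{k}^{2/2^{*}}\big)\geq S(p)$, a contradiction. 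Hence every $\nu_{k}=0$, so $\nu=|u|^{2^{*}}dx$ with $\|u\|_{L^{2^{*}}(\Omega)}=1$; in particular $u\neq 0$ and $u_{j}\to u$ in $L^{2^{*}}(\Omega)$. Weak lower semicontinuity then gives $\int_{\Omega}p|\nabla u|^{2}dx\leq\liminf_{j}\int_{\Omega}p|\nabla u_{j}|^{2}dx=S(p)$, so $u$ attains the infimum.

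I expect the real obstacle to be the quantum bound $\mu_{k}\geq S_{\alpha,\,\beta}\,\nu_{k}^{2/2^{*}}$, i.e. the sharp localization. At an interface point it rests on a careful blow-up matching the flat two-media profile of Theorem~\ref{th1}, and at a boundary point $x_{k}\in\partial\Omega$ one must check that the homogeneous Dirichlet condition does not drop the local constant below $S_{\alpha,\,\beta}$. The genuinely delicate case is $x_{k}$ interior to $\Omega_{1}$, whose naive local constant is only $\alpha S<S_{\alpha,\,\beta}$: one has to argue that, under the strict inequality $S(p)<S_{\alpha,\,\beta}$ and the geometry relating $\Omega_{1}$ to $\Gamma_{1}$, no mass can concentrate there. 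This localization step, rather than the measure bookkeeping, is where the substance of the proof lies.
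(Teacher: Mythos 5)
Your strategy (Lions' concentration--compactness with atomwise lower bounds) is genuinely different from the paper's, but it contains a real gap, and it is exactly the one you flag at the end: the quantum bound $\mu_{k}\geq S_{\alpha,\,\beta}\,\nu_{k}^{2/2^{*}}$ that your contradiction step ``grants'' is false for atoms $x_{k}$ in the interior of $\Omega_{1}$ (and likewise at points of $\partial\Omega\cap\overline{\Omega}_{1}$). There the sharp local constant is $\alpha S$, and by Theorem~\ref{th1} one has $\alpha S<S_{\alpha,\,\beta}$ strictly, since $\alpha<\beta$. Moreover this cannot be repaired in the way you hope. The hypothesis $S(p)<S_{\alpha,\,\beta}$ gives no leverage against such atoms: Aubin--Talenti bubbles supported in a small ball inside $\Omega_{1}$ show $S(p)\leq\alpha S$, so an atom in $\Omega_{1}$ carrying all the mass, with $\mu_{k}=\alpha S$, $\nu_{k}=1$, $u\equiv 0$, is perfectly consistent with $S(p)<S_{\alpha,\,\beta}$; your dichotomy then only yields $S(p)\geq S(p)\lambda^{2/2^{*}}+\alpha S\sum_{k}\nu_{k}^{2/2^{*}}$, from which no contradiction follows. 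Decisively, your argument nowhere uses the geometric condition (g.c.) in a quantitative way (your interface bound via blow-up holds just as well when $\Gamma_{1}$ is flat --- the flat configuration is precisely the model problem defining $S_{\alpha,\,\beta}$). Hence, if your scheme were sound, it would apply verbatim to $\Omega=B(0,R)$ with $\Gamma_{1}=\{x_{N}=0\}$, where again $S(p)\leq\alpha S<S_{\alpha,\,\beta}$, and would produce a minimizer there, contradicting the paper's Proposition on non-existence in the flat case. So the missing localization step is not a deferred technicality; it is where the entire content of the lemma sits, and it cannot be supplied by pointwise comparison constants.

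The paper's proof is organized precisely so as never to need atomwise constants. It is the Brezis--Nirenberg subtraction argument: writing $u_{j}=u+v_{j}$ with $u_{j}\rightharpoonup u$, it uses (i) the energy splitting $\int_{\Omega}p|\nabla u|^{2}dx+\int_{\Omega}p|\nabla v_{j}|^{2}dx=S(p)+o(1)$, (ii) the Brezis--Lieb identity $1=\|u\|_{L^{2^{*}}}^{2^{*}}+\|v_{j}\|_{L^{2^{*}}}^{2^{*}}+o(1)$, and (iii) the single \emph{global} inequality $S_{\alpha,\,\beta}\|v_{j}\|_{L^{2^{*}}}^{2}\leq\int_{\Omega}p(x)|\nabla v_{j}|^{2}dx$ (inequality (\ref{eqj5})), obtained by extending $v_{j}$ by zero to $\R^{N}$ and testing the definition of $S_{\alpha,\,\beta}$ relative to the hyperplane through the point $x_{0}$ furnished by (g.c.) --- this is the only place where the geometry enters. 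The gap hypothesis is then used twice: if $u\equiv0$, (i)--(iii) force $S_{\alpha,\,\beta}\leq S(p)$, a contradiction, so $u\not\equiv0$; and combining (i)--(iii) gives $\int_{\Omega}p|\nabla u|^{2}dx\leq S(p)\|u\|_{L^{2^{*}}}^{2}+\bigl[\frac{S(p)}{S_{\alpha,\,\beta}}-1\bigr]\int_{\Omega}p|\nabla v_{j}|^{2}dx+o(1)$, whose bracket is negative, so $u$ is a minimizer. The structural lesson is that the constant $S_{\alpha,\,\beta}$ must be applied to the whole escaping part $v_{j}$ at once, not atom by atom; any repair of your proof would have to replace your per-point quanta by such a global bound, at which point you would have reproduced the paper's argument.
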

\begin{proof}~\\
We follow the arguments of Brezis-Nirenberg (\cite{BN}, proof of Lemma 2.1) {and we rely on some idea of Demyanov-Nazarov (\cite{DN}, proof of Proposition 1.1)}.
Let $\{u_{j}\}\subset H_{0}^{1}(\Omega)$ be a minimizing sequence
for (\ref{eq5}) that is,
\begin{equation}
\displaystyle\int_{\Omega}p(x)|\nabla u_{j}|^{2}dx=S(p)+o(1),
\label{eqj1}
\end{equation}~
\begin{equation}
\|u_{j}\|_{L^{2^{*}}}=1, \label{eqj2}
\end{equation}
and
\begin{equation}
\beta(u_{j})\in \Gamma.
\label{april1}
\end{equation}
Easily we see that $\{u_{j}\}$ is bounded in
$H_{0}^{1}(\Omega)$, we may extract a subsequence still denoted by
$u_{j}$, such that
\begin{eqnarray*} u_{j}\rightharpoonup u &\textrm{weakly in
$H_{0}^{1}(\Omega)$},\\[\medskipamount]
u_{j}\rightarrow u &\textrm{strongly in $L^{2}(\Omega)$},\\[\medskipamount]
u_{j}\rightarrow u&\textrm{a.e. on $\Omega$},
\end{eqnarray*}
with
$\|u\|_{L^{2^{*}}}\leq {1}$. Set $v_{j}=u_{j}-u$, so that
\begin{eqnarray*}
v_{j}\rightharpoonup {0}&\textrm{weakly in
$H_{0}^{1}(\Omega)$}\\[\medskipamount]
v_{j}\rightarrow 0 &\textrm{strongly in $L^{2}(\Omega)$},\\[\medskipamount]
v_{j}\rightarrow{0}&\textrm{a.e. on $\Omega$}.
\end{eqnarray*}
Using (\ref{eqj1}) we write
\begin{equation} \int_{\Omega}p(x)|\nabla u(x)|^{2}dx+\int_{\Omega}p(x)|\nabla
v_{j}(x)|^{2}dx=S(p)+o(1),
\label{eqj3} \end{equation}
 since $v_{j}\rightharpoonup {0}$ weakly in
$H_{0}^{1}(\Omega)$.\\
On the other hand, it follows from a result of {Brezis-Lieb
(\cite{BL}, relation (1))} that
$$
\|u+v_{j}\|_{L^{2^{*}}}^{2^{*}}=\|u\|_{L^{2^{*}}}^{2^{*}}+\|v_{j}\|_{L^{2^{*}}}^{2^{*}}+o(1),
$$
(which holds since $v_{j}$ is bounded in $L^{2^{*}}$ and
$v_{j}\rightarrow 0$ a.e.). Thus, by (\ref{eqj2}), we have
\begin{equation}
1=\|u\|_{L^{2^{*}}}^{2^{*}}+\|v_{j}\|_{L^{2^{*}}}^{2^{*}}+o(1)
\label{eqj'4}
\end{equation}
and therefore
\begin{equation}
1\leq\|u\|_{L^{2^{*}}}^{2}+\|v_{j}\|_{L^{2^{*}}}^{2}+o(1).
\label{eqj4}
\end{equation}
Using the definition of $S_{\alpha_1,\,\alpha_2}$, extending $v_{j}$ by $0$ in $\R^{N}$ (still denoted by $v_{j}$) we obtain
\begin{eqnarray}
\|v_{j}\|_{L^{2^{*}}}^{2}&\leq& \displaystyle
\frac{1}{S_{\alpha_1,\,\alpha_2}}\left[\alpha_1
\int_{\R_{+,\,x_{0}}^{N}}|\nabla
v_{j}(x)|^{2}dx+\alpha_2\int_{\R_{-,\,x_{0}}^{N}}|\nabla v_{j}(x)|^{2}dx \right]\nonumber\\[\medskipamount]
&\leq& \displaystyle \frac{1}{S_{\alpha_1,\,\alpha_2}}\left[\alpha_1
\int_{\Omega\cap\R_{+,\,x_{0}}^{N}}|\nabla
v_{j}(x)|^{2}dx+\alpha_2\int_{\Omega\cap\R_{-,\,x_{0}}^{N}}|\nabla v_{j}(x)|^{2}dx \right]\nonumber\\[\medskipamount]
&\leq& \displaystyle \frac{1}{S_{\alpha_1,\,\alpha_2}}\left[\alpha_1
\int_{\Omega}|\nabla
v_{j}(x)|^{2}dx+\alpha_2\int_{\Omega}|\nabla v_{j}(x)|^{2}dx \right]\nonumber\\[\medskipamount]
\|v_{j}\|_{L^{2^{*}}}^{2}&\leq& \displaystyle \frac{1}{S_{\alpha_1,\,\alpha_2}}\int_{\Omega}p(x)|\nabla v_{j}(x)|^{2}dx.
\label{eqj5}
\end{eqnarray}
where
$\R_{+,\,x_{0}}^{N}=\displaystyle\{x=(x',x_{N})\in\R^{N},\,\,\,\setminus\,\,x'\in
\R^{N-1},\,\, x_{N}>x_{0N}\}$ and
$\displaystyle\R_{-,\,x_{0}}^{N}=\displaystyle\{x=(x',x_{N})\in\R^{N},\,\,\,\setminus\,\,x'\in
\R^{N-1},\,\, x_{N}<x_{0N}\}$ with $x_{ON}$ is such that $\displaystyle x_{0}=\displaystyle(x',x_{0N})$.\\
We claim that $u\not\equiv 0$.\\
Indeed,  suppose that $u\equiv 0$. From (\ref{eqj3}) we obtain
\begin{equation*}
\displaystyle \int_{\Omega}p(x)|\nabla v_{j}|^{2}dx=S(p)+o(1),
\end{equation*}
then
\begin{equation*}
\displaystyle \lim_{j\rightarrow +\infty}\int_{\Omega}p(x)|\nabla v_{j}|^{2}dx=S(p).
\end{equation*}
From (\ref{eqj'4}) we see that
\begin{equation*}
\displaystyle \lim_{j\rightarrow +\infty}\|v_{j}\|_{L^{2^{*}}}=1.
\end{equation*}
Or (\ref{eqj5}) gives that
\begin{equation*}
\displaystyle\|v_{j}\|_{L^{2^{*}}}^{2} S_{\alpha_1,\,\alpha_2}\leq \int_{\Omega}p(x)|\nabla v_{j}|^{2}dx.
\end{equation*}
Passing to limit in the previous inequality we obtain $S_{\alpha_1,\,\alpha_2}\leq S(p)$. This contradicts the hypothesis $S(p)<S_{\alpha_1,\,\alpha_2}$. Consequently $u\not\equiv 0$.\\
Now, we deduce from (\ref{eqj4}) and (\ref{eqj5}) that \begin{equation}
S(p)\leq
\displaystyle S(p)\|u\|_{L^{2^{*}}}^{2}+\frac{S(p)}{S_{\alpha_1,\,\alpha_2}}\int_{\Omega}p(x)|\nabla v_{j}(x)|^{2}dx+o(1). \label{eqj6} \end{equation}
Combining (\ref{eqj3}) and (\ref{eqj6}) we obtain
\begin{equation*}
\begin{array}{ll}
\displaystyle\int_{\Omega}p(x)|\nabla u(x)|^{2}+\int_{\Omega}p(x)|\nabla v_{j}(x)|^{2}dx\leq
S(p)\|u\|_{L^{2^{*}}}^{2}+\displaystyle\frac{S(p)}{S_{\alpha_1,\,\alpha_2}}\int_{\Omega}p(x)|\nabla v_{j}(x)|^{2}dx+o(1).
\end{array}
\end{equation*}
Thus
\begin{equation*}
\begin{array}{ll}
\displaystyle\int_{\Omega}p(x)|\nabla
u(x)|^{2}dx\leq
S(p)\|u\|_{L^{2^{*}}}^{2}+\displaystyle\left[\frac{S(p)}{S_{\alpha_1,\,\alpha_2}}-1\right]\int_{\Omega}p(x)|\nabla v_{j}(x)|^{2}dx+o(1).
\end{array}
\end{equation*}
Since $S(p)<S_{\alpha_1,\,\alpha_2}$, we deduce
\begin{equation}
\displaystyle\int_{\Omega}p(x)|\nabla u(x)|^{2}dx\leq S(p)\|u\|_{L^{2^{*}}}^{2},
\label{eqj7} \end{equation}
Therefore
\begin{equation*}
\displaystyle\int_{\Omega}p(x)|\nabla u(x)|^{2}dx= S(p)\|u\|_{L^{2^{*}}}^{2}.
\end{equation*}
It follows that $u_{j}\rightarrow u$ strongly in $L^{2^{*}}(\Omega)$ and $\beta(u)\in \Gamma$. This means that $u$ is a minimum of $S(p)$.
\end{proof}
\begin{lemma}
Assume that there exists $x_{0}$ in the interior of $\Gamma$ such that the (g.c.) holds. Then
$$S(p)<S_{\alpha_1,\,\alpha_2} .$$
\label{lm2}
\end{lemma}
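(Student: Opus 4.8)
The plan is to produce an explicit family $(w_\e)_{\e>0}$ of admissible functions concentrating at the interface point $x_0$, and to show that for $\e$ small the Rayleigh quotient of $w_\e$ lies strictly below the value $S_{\alpha,\beta}$ computed in Theorem~\ref{th1}; since $S(p)$ is an infimum, one such family suffices. For the profile I would reuse the glued two--media instanton from the proof of Theorem~\ref{th1}: with $U(y)=c_N(1+|y|^2)^{-(N-2)/2}$ and $U_\e(y)=\e^{-(N-2)/2}U(y/\e)$, I glue along a hyperplane two copies of $U_\e$ rescaled anisotropically in the normal variable by $t_0^{\theta}$ and $(1-t_0)^{\theta}$, so that the two pieces share the same trace and the mass split $t_0:(1-t_0)$ realizes $S_{\alpha,\beta}$. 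Call the resulting profile $W_\e$; by Theorem~\ref{th1} its flat energy divided by its $L^{2^{*}}$--norm tends to $S_{\alpha,\beta}$.

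Next I would fix coordinates centred at $x_0$ with the $x_N$--axis along the inner normal of $\Omega_1$, so that near $x_0$ the interface $\Gamma_1$ is a graph $x_N=\psi(x')$ with $\psi(0)=0$, $\nabla\psi(0)=0$, while $\Omega_1=\{x_N>\psi(x')\}$ and $\Omega_2=\{x_N<\psi(x')\}$ locally. The geometric condition translates into $\psi\le 0$ near $0$ together with $-\Delta'\psi(0)=(N-1)H(x_0)>0$. I build $W_\e$ relative to the flat tangent plane $\{x_N=0\}$ and set $w_\e=\eta\,W_\e$, where $\eta$ is a fixed cut--off supported in a small ball $B(x_0,\rho)\subset\Omega$; thus $w_\e\in H_0^{1}(\Omega)$, while in the numerator the weight $p$ follows the genuine curved interface.

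The heart of the matter is the comparison between the true numerator and its flat model. Since the denominator $\int_\Omega|w_\e|^{2^{*}}$ does not see $p$, it equals the model mass up to an error that is negligible at the order considered. In the numerator the only discrepancy comes from the thin ``gap'' $G_\e=\{\psi(x')<x_N<0\}$, where the flat splitting assigns $\beta$ but $p$ assigns $\alpha$. Because $\psi\le 0$, one has the exact identity
\[
\int_\Omega p\,|\nabla w_\e|^2=\alpha\!\int_{\{x_N>0\}}\!|\nabla w_\e|^2+\beta\!\int_{\{x_N<0\}}\!|\nabla w_\e|^2-(\beta-\alpha)\!\int_{G_\e}\!|\nabla w_\e|^2 .
\]
The first two terms reproduce the model numerator $N_0\to S_{\alpha,\beta}\,\|W_\e\|_{2^{*}}^{2}$, while the last term is strictly negative since $\beta>\alpha$. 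A rescaling $x'=\e z'$, using $|\psi(x')|\sim\tfrac12 x'^{\,T}\mathrm{Hess}\,\psi(0)\,x'$, shows this correction to be of order $-\,c\,H(x_0)\,\e$ for $N\ge 4$ (and $-\,c\,H(x_0)\,\e\log(1/\e)$ for $N=3$) with $c>0$. Dividing by the essentially unchanged denominator yields a quotient of the form $S_{\alpha,\beta}-c\,H(x_0)\,\e+o(\e)$, whence $S(p)<S_{\alpha,\beta}$.

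The main obstacle is precisely this curvature expansion: one must expand $\int_{G_\e}|\nabla w_\e|^2$ sharply enough to extract its first nonvanishing term and identify the coefficient as a positive multiple of $H(x_0)$, while verifying that (i) the two anisotropic normal scalings preserve trace continuity and do not contribute at the same order, (ii) the cut--off $\eta$ and the deviation of $\Gamma_1$ from its tangent plane beyond second order produce only higher--order errors, and (iii) the borderline dimension $N=3$, where the transverse integral diverges logarithmically, is treated separately. Pinning down the sign and the sharp order of this single correction term is where all the work lies.
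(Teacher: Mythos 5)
Your proposal is correct and follows essentially the same route as the paper: the paper also tests $S(p)$ with the anisotropically glued, cut-off instanton $u_{0,\varepsilon,t_{0}}$ concentrating at $x_{0}$, and the curvature correction you extract from the gap between $\Gamma_{1}$ and its tangent plane is precisely the content of the expansions (\ref{eq9})--(\ref{eq10}), which enter $Q_{1}$ and $Q_{2}$ with opposite signs and combine to the strictly negative term $-(\beta-\alpha)\,S\,H(0)\,A(N)\,\varepsilon^{\frac{1}{2}}$ (resp. $\varepsilon^{\frac{1}{2}}|\ln\varepsilon|$ for $N=3$). The only differences are cosmetic: the paper imports these expansions from Adimurthi--Mancini \cite{AM} rather than computing the gap integral directly, and its concentration parameter is the square of yours, so your claimed orders $\e$ and $\e\log(1/\e)$ match its $\varepsilon^{\frac{1}{2}}$ and $\varepsilon^{\frac{1}{2}}|\ln\varepsilon|$.
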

\begin{proof}
Let $\{\lambda_{i}(x_{0})\}_{1\leq i\leq N-1}$, denote the principal curvatures and $H(x_{0})=\displaystyle\frac{1}{N-1}\sum_{i=1}^{N-1}\lambda_{i}(x_{0})$ the mean curvature at $x_{0}$ with respect to the unit normal.\\
For the simplicity, we suppose that $x_{0}=0$. Therefore we note $\{\lambda_{i}\}_{1\leq i\leq N-1}$ the principal curvatures at $0$ and $\displaystyle H(0)=\frac{1}{N-1}\sum_{i=1}^{N-1}\lambda_{i}$. Let $R>0$,
 such that
\begin{equation*}
B(R)\cap \Omega_{1}=\left\{(x',\,x_{N})\in B(R);\,\,x_{N}> \rho(x')\right\}
\label{eq6}
\end{equation*}
\begin{equation*}
B(R)\cap \Omega_{2}=\left\{(x',\,x_{N})\in B(R);\,\,x_{N}< \rho(x')\right\}
\label{eq7}
\end{equation*}
\begin{equation*}
B(R)\cap \Gamma=\left\{(x',\,x_{N})\in B(R);\,\,x_{N}=\rho(x')\right\}
\label{eq8}
\end{equation*}
where $x'=(x_{1},x_{2},...,x_{N-1})$ and $\rho(x')$ is defined by
\begin{equation*}
\rho(x')=\Sigma_{i=1}^{N-1}\lambda_{i}x_{i}^{2}+O(|x'|^{3}).
\end{equation*}\\
We note that the condition (g.c.) implies that $\rho(x')\geq 0$.\\
Let us define, for $\varepsilon>0$ and for $t\in ]0,\,1[$ the function
\begin{equation*}
u_{0,\varepsilon, t}(x)=\left\{\begin{array}{lll}
\frac{\varphi(x)}{(\varepsilon +
{\bf{|x'|}}^{2}+t^{-\frac{N-2}{2}}{x_{N}}^{2})^\frac{{\bf{N}}-2}{2}} &\textrm{If $x_{N}> {0}$}\\[\bigskipamount]
\frac{\varphi(x)}{(\varepsilon +
{\bf{|x'|}}^{2}+(1-t)^{-\frac{N-2}{2}}{x_{N}}^{2})^\frac{{\bf{N}}-2}{2}} &\textrm{If $x_{N}< {0}$}
\end{array}
\right.
\end{equation*}
where $\varphi$ is a radial $C^{\infty}$-function such that
\begin{equation*}
\varphi(x)=\left\{\begin{array}{lll}
1 &\textrm{if $|x|\leq \frac{R}{4}$}\\[\medskipamount]
0 &\textrm{if $|x|\geq \frac{R}{2}$}.
\end{array}
\right.
\end{equation*}
There exists
$t_{0}=\displaystyle\frac{(\frac{\alpha_1}{\alpha_2})^{\frac{N}{2}}}{1+(\frac{\alpha_1}{\alpha_2})^{\frac{N}{2}}}$
such that
\begin{equation*}
\begin{array}{lll}
\displaystyle
\displaystyle\sup_{t\in[0,\,1]}\frac{(\alpha_1\,t^{\frac{2}{2^{*}}}+\alpha_2\,(1-t)^{\frac{2}{2^{*}}})\,
S}{2^{\frac{2}{N}}}
=\displaystyle\frac{(\alpha_1\,t_{0}^{\frac{2}{2^{*}}}+\alpha_2\,(1-t_{0})^{\frac{2}{2^{*}}})\,
S}{2^{\frac{2}{N}}}
=\left(\frac{\alpha_1^{\frac{N}{2}}+\alpha_2^{\frac{N}{2}}}{2}
\right)^{\frac{2}{N}}S.
\end{array}
\end{equation*}
We note $u_{0,\varepsilon}(x)=u_{0,\varepsilon, t_{0}}(x)$. Set, for $i\in\,\{1,\,2\}$,
\begin{equation*}
Q_{i}(u)=\frac{\displaystyle\int_{\Omega_{i}}\alpha_i|\nabla u|^{2}dx}{\displaystyle\left(\int_{\Omega}|u|^{2^{*}}dx\right)^{\frac{2}{2^{*}}}}
\end{equation*}
and
\begin{equation*}
Q(u)= Q_{1}(u)+ Q_{2}(u).
\end{equation*}
In order to obtain the result of Lemma~{\ref{lm2}}, we use $u_{0,\varepsilon}$ as a test function for $S(p)$.\\
From (\cite{AM}, page 13), direct computation gives
\begin{equation}
Q_{1}(u_{0,\,\varepsilon})=\left\{\begin{array}{lll} \frac{\alpha_1 t_{0}^{\frac{2}{2^{*}}}\, S}{2^{\frac{2}{N}}}+ \,\alpha_1 \,S H(0)\, A(N)\,\varepsilon^{\frac{1}{2}}|\ln(\varepsilon)| +O(\varepsilon^{\frac{1}{2}})\quad &\textrm{if $N=3$}\\[\medskipamount]
\frac{\alpha_1 t_{0}^{\frac{2}{2^{*}}}\, S}{2^{\frac{2}{N}}}+ \,\alpha_1\,S H(0)\,
A(N)\,\varepsilon^{\frac{1}{2}} + O(\varepsilon
|\ln(\varepsilon)|)\quad &\textrm{if $N\geq 4$}
\end{array}
\right.
\label{eq9}
\end{equation}
and
\begin{equation}
Q_{2}(u_{0,\,\varepsilon})=\left\{\begin{array}{lll} \frac{\alpha_2(1-t_{0})^{\frac{2}{2^{*}}}\, S}{2^{\frac{2}{N}}}-\,  \alpha_1\,S H(0)\, A(N)\,\varepsilon^{\frac{1}{2}}|\ln(\varepsilon)| +O(\varepsilon^{\frac{1}{2}})\quad &\textrm{if $N=3$}\\[\medskipamount]
\frac{\,\alpha_2(1-t_{0})^{\frac{2}{2^{*}}}\, S}{2^{\frac{2}{N}}}-\,\alpha_2\,S H(0)\,
A(N)\,\varepsilon^{\frac{1}{2}} +O(\varepsilon
|\ln(\varepsilon)|)\quad &\textrm{if $N\geq 4$}
\end{array}
\right.
\label{eq10}
\end{equation}
where $A(N)$ is a positive constant.\\
Combining (\ref{eq9}) and (\ref{eq10}) we see that,
\begin{equation*}
Q(u_{0,\,\varepsilon})=\left\{\begin{array}{lll} \frac{(\alpha_1\,t_{0}^{\frac{2}{2^{*}}}+\alpha_2\,(1-t_{0})^{\frac{2}{2^{*}}})\, S}{2^{\frac{2}{N}}}-\,(\alpha_2-\alpha_1)H(0)\,S\, A(N)\,\varepsilon^{\frac{1}{2}}|\ln(\varepsilon)|+O(\varepsilon^{\frac{1}{2}}) &\textrm{if $N=3$}\nonumber\\[\medskipamount]
\frac{(\alpha_1\,t_{0}^{\frac{2}{2^{*}}}+\alpha_2\,(1-t_{0})^{\frac{2}{2^{*}}})\,
S}{2^{\frac{2}{N}}}-\,(\alpha_2-\alpha_1)H(0)\,S\,
A(N)\,\varepsilon^{\frac{1}{2}}+O(\varepsilon |\ln(\varepsilon)|)
&\textrm{if $N\geq 4$}.
\end{array}
\right.
\end{equation*}
Therefore, using the definition of $t_{0}$, we obtain
\begin{equation*}
Q(u_{0,\,\varepsilon})\leq\left\{\begin{array}{lll} \left(\frac{\alpha_1^{\frac{N}{2}}+\alpha_2^{\frac{N}{2}}}{2} \right)^{\frac{2}{N}}S-\,(\alpha_2-\alpha_1)H(0)\,S\, A(N)\,\varepsilon^{\frac{1}{2}}|\ln(\varepsilon)|+O(\varepsilon^{\frac{1}{2}}) &\textrm{if $N=3$}\nonumber\\[\bigskipamount]\vspace{2mm}
\left(\frac{\alpha_1^{\frac{N}{2}}+\alpha_2^{\frac{N}{2}}}{2}
\right)^{\frac{2}{N}}S-\,(\alpha_2-\alpha_1)H(0)\,S\,
A(N)\,\varepsilon^{\frac{1}{2}}+O(\varepsilon |\ln(\varepsilon)|)
&\textrm{if $N\geq 4$}.
\end{array}
\right.
\label{eq11}
\end{equation*}
Finally, since $\alpha_1 <\alpha_2\,$ then we obtain the desired result.
\end{proof}
\begin{remark}(see \cite{AM}):
By looking at the previous proof, it follows that we can
relax the condition (g.c.) by allowing some of the $\lambda_{i}$'s
to be negative with mean curvature positive.
\end{remark}
{\footnotesize
}

\end{document}